\documentclass[12pt]{amsart}
\usepackage{amsmath,amssymb,amsfonts}
\usepackage{xcolor}
\usepackage[colorlinks=true,linkcolor=red,citecolor=blue]{hyperref}
\setcounter{MaxMatrixCols}{10}

\newtheorem{theorem}{Theorem}[section]

\newcommand\myenum[1]

\begin{document}
\title[A note on the ideals $E^a$ which are discrete]{A note on the ideals $E^a$ which are discrete}

\author{H. Khabaoui}
\address{Hassan Khabaoui, Universit\'{e} Moulay Ismail, Facult\'{e} des Sciences, D\'{e}partement de Math\'{e}matiques, B.P. 11201 Zitoune, Mekn\`{e}s, Morocco}
\email{khabaoui.hassan2@gmail.com}

\author{K. El fahri}
\address{Kamal El fahri, Department of Mathematics, Ibn Zohr University, Faculty of Sciences, Agadir, "Moroccan Association of Ordered Structures, Operators Theory, Applications and Sustainable Development (MAOSOTA)", Morocco.}
\email{kamalelfahri@gmail.com }

\author{Jawad H'michane}
\address[J. H'michane]{Engineering Sciences Lab, ENSA, B.P 241, Ibn Tofail University, K\'{e}nitra,  "Moroccan Association of Ordered Structures, Operators Theory, Applications and Sustainable Development (MAOSOTA)", Morocco}

\begin{abstract}
This note looks at two mistakes in the paper "Some characterizations of the ideals $E^{a}$ which are discrete" by Khabaoui et al., published in "Rend. Circ. Mat. Palermo, II. Ser 72, 1325–1336 (2023)."
\end{abstract}

\keywords{L-weakly compact set, L-weakly compact operator, order continuous Banach lattice, discrete vector lattice}
\subjclass[2010]{46B42, 47B60, 47B65.}
\maketitle

\section{Main results}

This note looks at two mistakes in the paper "Some characterizations of the ideals $E^{a}$ which are discrete" by Khabaoui et al., published in "Rend. Circ. Mat. Palermo, II. Ser 72, 1325–1336 (2023)." We will demonstrate and correct two mistakes in the  article "Some characterizations of the ideals $E^{a}$ which are discrete" by Khabaoui et al., published in "Rend. Circ. Mat. Palermo, II. Ser 72, 1325–1336 (2023)". Fortunately it is only the proofs that are incorrect and not the results.  The same terms and notations are used in this note as in the original paper, and so it has been decided not to reintroduce them here.

 The first error is in Theorem 3.7 \cite{Khabaoui}, where in the proof of the implication $(1) \Rightarrow (2)$, in view of $||y_n||=1$ for every $n$, it follows that the operator  $R :  \ell^{\infty}   \to  F$ is not well defined. So the given proof of this implication is not true. The correct proof of Theorem 3.7 \cite{Khabaoui} can be given as follows:
\begin{theorem}\label{duarecip}	
Let $E$ and $F$ be two Banach lattice such that $F'$ is order continuous. The following statements are equivalent:
	\begin{enumerate}
		\item Every regular operator $T : E \longrightarrow F$ is Lwcc, whenever its adjoint operator $T' : F' \longrightarrow E'$ is Lwcc.
		\item One of  the following conditions is holds:
			\begin{enumerate}
			\item  $F$ is order continuous.
			\item $ E^{a}$ is discrete.
		\end{enumerate}
	\end{enumerate}
\end{theorem}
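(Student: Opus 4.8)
The plan is to first remove the adjoint from the hypothesis by invoking the Meyer--Nieberg duality between the two classes: an operator is M-weakly compact if and only if its adjoint is L-weakly compact. Hence ``$T'$ is Lwcc'' is equivalent to ``$T$ is M-weakly compact'', and statement (1) becomes the self-contained assertion that \emph{every regular M-weakly compact operator $T:E\to F$ is L-weakly compact}. I would prove the theorem in this reformulated shape, which also clarifies why the two alternatives in (2) appear: case (a) controls the target lattice $F$, case (b) controls the source $E$.

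For $(2)\Rightarrow(1)$ I would treat the two cases separately. In case (a), when $F$ is order continuous, every M-weakly compact operator into $F$ is automatically L-weakly compact; this is a standard fact (see \cite{Khabaoui}), and combined with the reduction above it yields (1) at once. In case (b), when $E^{a}$ is discrete, I would use the characterisation of discreteness of $E^{a}$ from \cite{Khabaoui} to show that, once $T$ is M-weakly compact, the solid hull of $T(B_{E})$ cannot carry a disjoint sequence bounded away from $0$, so $T(B_{E})$ is L-weakly compact. Both implications are essentially the (correct) content of the original paper; the defect to be repaired lies entirely in the converse.

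For $(1)\Rightarrow(2)$ I would argue by contraposition, assuming that $F$ is \emph{not} order continuous and that $E^{a}$ is \emph{not} discrete, and produce a regular operator that is M-weakly compact but not L-weakly compact. Since $F$ is not order continuous there is a disjoint sequence $(y_{n})$ in an order interval $[0,u]\subset F_{+}$ with $\|y_{n}\|=1$. The point at which the original argument breaks is the attempt to define an operator on $\ell^{\infty}$ by $(\lambda_{n})\mapsto\sum_{n}\lambda_{n}y_{n}$; because $\|y_{n}\|=1$ this series need not converge. The repair is to observe that for $(\lambda_{n})\in c_{0}$ disjointness gives $\bigl|\sum_{n\ge N}\lambda_{n}y_{n}\bigr|\le(\sup_{n\ge N}|\lambda_{n}|)\,u$, so the series converges in norm and $R:c_{0}\to F$, $Re_{n}=y_{n}$, is a well-defined positive (hence regular) operator with $\|R\|\le\|u\|$. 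I would then build a positive M-weakly compact operator $S:E\to c_{0}$ with $e_{n}\in S(B_{E})$ for every $n$, and set $T=R\circ S$. Composing an M-weakly compact operator with a bounded one keeps it M-weakly compact, so $T$ is regular and M-weakly compact; on the other hand $T$ carries the bounded preimages of the $e_{n}$ onto the $y_{n}$, so $\{y_{n}\}\subseteq T(B_{E})$ is a disjoint normalised sequence and $T(B_{E})$ is not L-weakly compact. This contradicts (1) and finishes the contrapositive.

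The main obstacle, and the place where the hypothesis ``$E^{a}$ not discrete'' must be spent, is the construction of $S$. Concretely I would need a bounded, necessarily non-disjoint system $(x_{n})$ in $E$ together with positive, weak-star null functionals $(f_{n})\subset E'_{+}$ satisfying $f_{m}(x_{n})=\delta_{mn}$ and such that $\{f_{n}\}$ is an L-weakly compact subset of $E'$; for then $S(x)=(f_{n}(x))_{n}$ maps into $c_{0}$, is positive with $S(x_{n})=e_{n}$, and is M-weakly compact precisely because $\{f_{n}\}$ is L-weakly compact (via the characterisation of L-weakly compact sets in terms of disjoint bounded sequences in the domain). Non-discreteness of $E^{a}$ is exactly what makes such a non-disjoint bounded system available while still forcing disjoint sequences to be sent to $0$; a discrete $E^{a}$ would destroy this freedom, which is why alternative (b) blocks the construction. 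The delicate verifications are that the functionals can simultaneously be kept positive (for regularity of $T$) and have L-weakly compact range (for M-weak compactness of $S$), together with the well-definedness fix described above that the original proof overlooked.
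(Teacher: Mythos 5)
Your one genuinely correct insight is the repair of $R$: defining it on $c_0$ rather than $\ell^{\infty}$ and using disjointness of $(y_n)$ together with $0\le y_n\le u$ to get norm convergence of $\sum_n \alpha_n y_n$ -- this is exactly the paper's fix. Everything else, however, rests on a misidentification of the class Lwcc. In this paper ``Lwcc'' does not mean ``L-weakly compact operator''; it is the L-weak compactness \emph{continuity} property: $T$ is Lwcc when $\|Tx_n\|\to 0$ for every weakly null L-weakly compact sequence $(x_n)$ in the domain. Consequently the Meyer--Nieberg duality you invoke ($T$ is M-weakly compact iff $T'$ is L-weakly compact) does not convert the hypothesis ``$T'$ is Lwcc'' into ``$T$ is M-weakly compact''. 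The two classes genuinely differ: every Dunford--Pettis operator is Lwcc -- the paper's own proof uses precisely this, concluding that $T'$ is Lwcc because it factors through $\ell^1$ and is therefore Dunford--Pettis -- whereas a Dunford--Pettis operator need not be L-weakly compact (the identity of $\ell^1$). So your reformulated statement, ``every regular M-weakly compact operator $E\to F$ is L-weakly compact,'' is not the theorem to be proved, and both directions of your argument address the wrong statement; in particular, for $(2)\Rightarrow(1)$ the paper argues quite differently, via AM-compactness (Theorem 3.5 of \cite{Khabaoui} combined with Proposition 3.7.23 of \cite{Mey}).

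Even within your own framework the decisive step is missing. For $(1)\Rightarrow(2)$ you need a positive M-weakly compact $S:E\to c_0$ with $e_n\in S(B_E)$, which you propose to extract from a biorthogonal system $f_m(x_n)=\delta_{mn}$ with $(f_n)$ positive, weak*-null and L-weakly compact; you give no construction, and non-discreteness of $E^{a}$ is not known to produce such a system. What non-discreteness actually yields (via Corollary 3.1 of \cite{Khabaoui} and Wickstead's argument \cite{Wick}) is much weaker: a weakly null L-weakly compact sequence $(x_n)$ in $E$ and functionals $g,g_n\in[-h,h]$ with $g_n\to g$ weak* and $g_n(x_n)>\varepsilon$ -- no biorthogonality and no positivity. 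The paper's proof works directly with this data: it sets $S(x)=\bigl((g-g_n)(x)\bigr)_n$, which lands in $c_0$ precisely because of the weak* convergence, puts $T=R\circ S$, shows $T$ is not Lwcc by testing against $(x_n)$ itself (disjointness of the $y_k$ gives $\|Tx_n\|\ge |(g-g_n)(x_n)|\nrightarrow 0$), and obtains that $T'$ is Lwcc for free from the Schur property of $\ell^1$. None of the delicate verifications you flag (positivity of the $f_n$, L-weak compactness of their range, the $\delta_{mn}$ condition) is needed, and none of them is available.
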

	\begin{proof}
$1) \Rightarrow 2)$ Assume by way of contradiction that $F$ is not order continuous and that $E^{a}$ is not discrete. Since $F$ is not order continuous, then it follows from Theorem 2.4.2 \cite{Mey} that there exist some $y\in  F^{+}$ and a disjoint sequence $(y_n) \in \left[0,y \right]$ such that $||y_n||=1$, for every $n$. As $ E^{a}$ is not discrete, by Corollary 3.1 \cite{Khabaoui}  and the proof of Theorem \cite{Wick} we infer that there exist a weakly null sequence $(x_n)$ of $E$ which is L-weakly compact, $0\leq h\in E'$, $g,g_n \in \left[-h,h \right]$ such that  $g_n \overset{w^*}\longrightarrow g$ and $g_n(x_n)>\varepsilon $ for every $n$ and  for some $\varepsilon >0 $. We consider the operators $S$ and $R$ defined respectively by:
$$
\begin{array}{ccccc}
S & : & E  & \to & c_0 \\
& & x & \mapsto & \left( (g-g_n)(x)\right)_n  \\
\end{array}
$$
and
$$ \begin{array}{ccccc}
R & : & c_0  & \to & F \\
& & (\alpha_{n}) & \mapsto & \sum_{n=1}^{\infty} \alpha_{n} y_n.  \\
\end{array}$$
We put $T=R\circ S$, the operator $T$ is not Lwcc. Indeed, for a fixed integer $n$ and  since $(y_k)$ is a disjoint sequence, then for every integer $m$, we have
$$|\sum_{k=1}^{m} \left(g-g_k \right)(x_n)y_k |= \sum_{k=1}^{m} |\left(g-g_k \right)(x_n)|y_k$$
and by taking norm limits we see that
\begin{eqnarray*}
	|T(x_n)| &=& |\sum_{k=1}^{\infty} \left(g-g_k \right)(x_n)y_k |\\
	&=& \sum_{k=1}^{\infty} |\left(g-g_k \right)(x_n)|y_k \\
	&\geq& |\left(g-g_n \right)(x_n)|y_n
\end{eqnarray*}
hence, $||T(x_n)||\geq |\left(g-g_n \right)(x_n)| $ and since $\left(g-g_n \right)(x_n)\nrightarrow 0$, then $||T(x_n)||\nrightarrow 0$. That is $T$ is not Lwcc. Since $\ell^1$ has the Schur property, then $T'$ is Dunford Pettis and hence $T'$ is Lwcc.

$2-a) \Rightarrow 1)$ Let $T : E \longrightarrow F$ be a regular operator such that $T'$ is Lwcc. Since $F'$ is order continuous, then it follows from Theorem 3.5 \cite{Khabaoui} that $T'$ is AM-compact. As $F$ is order continuous, then by Proposition 3.7.23 \cite{Mey} the operator $T$ is AM-compact and hence $T$ is Lwcc.

$2-b) \Rightarrow 1)$ Obvious.
	\end{proof}		
	\color{black}

The second error is in the proof of Theorem 3.8 \cite{Khabaoui}, where in this result, it was proposed to characterize Banach lattices such that operators dominated by Lwcc operator are Lwcc. But there was an error in the proof of the above-mentioned theorem. In fact, the operator $S$  which  is used in the implication $(1) \Rightarrow(2)$  unfortunately is not well defined, because $((f_n-f)(x))\nrightarrow 0$  for each $x \in E$. Now we are in a position to give the correct proof of this false  implication.

\begin{theorem}\label{Ae}
Let $E$ and $F$ be Banach lattices. Then, the following statements are equivalent:
 \begin{enumerate}
 \item For all operators $S, T : E \longrightarrow F$ such that $0 \leq S \leq T$ and $T$ is Lwcc, the operator $S$ is Lwcc.
 \item One of the following conditions is holds:
  \begin{enumerate}
   \item $E^a$ is discrete;
   \item $F$ is order continuous.
  \end{enumerate}
 \end{enumerate}
\end{theorem}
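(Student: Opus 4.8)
The plan is to prove the two implications separately, treating $(2)\Rightarrow(1)$ as the routine direction and concentrating the real work on the contrapositive of $(1)\Rightarrow(2)$, where I would reuse verbatim the extraction of data carried out in the proof of Theorem \ref{duarecip}.

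For $(2)\Rightarrow(1)$ I argue as follows. If $E^{a}$ is discrete, then by Corollary 3.1 \cite{Khabaoui} (the same statement used, contrapositively, in the proof of Theorem \ref{duarecip}) every L-weakly compact subset of $E$ is relatively norm compact; hence every bounded operator out of $E$, and in particular $S$, carries L-weakly compact sets to relatively compact sets, so $S$ is Lwcc and $(1)$ holds trivially. If instead $F$ is order continuous, I use that ``$T$ Lwcc'' means $T$ sends L-weakly compact sets to relatively compact sets: for an L-weakly compact $A\subseteq E$ the set $\mathrm{sol}(A)$ is again L-weakly compact, so $K:=T(\{|a|:a\in A\})$ is relatively compact; since $|Sa|\le S|a|\le T|a|$ for every $a\in A$ we get $S(A)\subseteq\mathrm{sol}(K)$, and in an order continuous Banach lattice the solid hull of a relatively compact set is relatively compact, whence $S(A)$ is relatively compact and $S$ is Lwcc.

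For $(1)\Rightarrow(2)$ I assume $E^{a}$ is not discrete and $F$ is not order continuous and manufacture operators $0\le S\le T$ with $T$ Lwcc but $S$ not. From $F$ not order continuous, Theorem 2.4.2 \cite{Mey} gives $y\in F^{+}$ and a disjoint sequence $(y_n)\subseteq[0,y]$ with $\|y_n\|=1$; from $E^{a}$ not discrete, Corollary 3.1 \cite{Khabaoui} together with the proof of \cite{Wick} gives a weakly null, L-weakly compact sequence $(x_n)$ in $E$, a $0\le h\in E'$ and $g,g_n\in[-h,h]$ with $g_n\overset{w^*}{\longrightarrow}g$ and, after passing to a subsequence, $(g-g_n)(x_n)\ge\varepsilon>0$. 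For the dominating operator I take the rank one positive map $T(x)=2h(x)\,y$, which is compact and hence Lwcc. For the dominated operator the natural candidate is $S(x)=\sum_n(g-g_n)^{+}(x)\,y_n$: since $0\le(g-g_n)^{+}\le 2h$ one has, formally, $0\le S(x)\le 2h(x)\bigvee_n y_n\le 2h(x)\,y=T(x)$ on $E^{+}$, while $(g-g_n)^{+}(x_n)\ge(g-g_n)(x_n)\ge\varepsilon$ would force $\|S(x_n)\|\ge\varepsilon\|y_n\|=\varepsilon$; as $(x_n)$ is weakly null and L-weakly compact with $\|S(x_n)\|\not\to0$, this would exhibit $S$ as not Lwcc.

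The hard part, and exactly the point where the original argument collapses, is that this $S$ need not be well defined: the positive part of a weak* null sequence need not be weak* null, so $(g-g_n)^{+}(x)\not\to0$ in general and the norm series $\sum_n(g-g_n)^{+}(x)\,y_n$ diverges. My way around this is to abandon norm summation and realise $S$ through the lattice structure by factoring $S=B\circ A$, where $A:E\to\ell^{\infty}$, $A(x)=\big((g-g_n)^{+}(x)\big)_n$, is well defined and positive because each coordinate is dominated by $2h$, and $B:\ell^{\infty}\to F^{\delta}$ (the Dedekind completion of $F$) is the positive operator determined on the positive cone by $B(\alpha)=\bigvee_n\alpha_n y_n$, which exists since the truncations are order bounded by $\|\alpha\|_{\infty}\,y$. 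Then $S=B\circ A$ is genuinely defined, $B(\mathbf 1)=\bigvee_n y_n\le y$ gives $0\le S\le T$, and applying the band projection $P_n$ onto $y_n$ yields $P_n(Sx_n)=(g-g_n)^{+}(x_n)\,y_n\ge\varepsilon\,y_n$, so $\|Sx_n\|\ge\varepsilon$ even for signed $x_n$. The only remaining bookkeeping is to transfer the conclusion from $F^{\delta}$ back to $F$ (or to verify directly that the relevant $S$ and $T$ already take values in $F$), which I expect to be routine; the genuine obstacle is the well-definedness just discussed, and the passage to order sums is the device that repairs it.
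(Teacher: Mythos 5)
Your case $(2a)\Rightarrow(1)$ is sound, and in fact proves more than needed: if $E^{a}$ is discrete then order intervals of the order continuous lattice $E^{a}$ are norm compact, and since L-weakly compact sets are almost order bounded in $E^{a}$, every L-weakly compact subset of $E$ is relatively compact, so \emph{every} bounded operator out of $E$ is Lwcc (whether this is literally what Corollary 3.1 of the original article says is a citation issue, not a mathematical one; the paper itself uses that corollary in the sequential form ``$(x_n)$ weakly null and L-weakly compact implies $(|x_n|)$ weakly null and L-weakly compact''). The other two implications, however, have genuine gaps.

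In $(1)\Rightarrow(2)$, the step you call ``routine bookkeeping'' is precisely where your construction collapses. Your operator $B:\ell^{\infty}\to F^{\delta}$, $B(\alpha)=\bigvee_n \alpha_n y_n$, genuinely takes values outside $F$: for $F=c$, $y=\mathbf{1}$, $y_n=e_n$ and $\alpha=(1,0,1,0,\dots)$ the supremum is not a convergent sequence. Since the coefficients $(g-g_n)^{+}(x)$ do \emph{not} tend to $0$, there is no way to force the values of $S=B\circ A$ into $F$; and a pair $0\le S\le T$ of operators into $F^{\delta}$ does not contradict statement (1), which quantifies over operators into $F$ (nor can you transfer the hypothesis, since $F$ order continuous is not equivalent to $F^{\delta}$ order continuous --- take $F=c_0$). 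There is also a sign problem: from $g_n(x_n)>\varepsilon$ and $g(x_n)\to 0$ you get $(g_n-g)(x_n)\ge\varepsilon/2$ eventually, but for a \emph{signed} element $x_n$ this does not yield $(g_n-g)^{+}(x_n)\ge\varepsilon/2$, because $\phi^{+}(x)\ge\phi(x)$ requires $x\ge 0$; your band-projection computation $P_n(Sx_n)=(g-g_n)^{+}(x_n)y_n$ inherits the same defect. The paper's repair is entirely different and avoids both issues: it keeps the \emph{signed} coefficients $(g-g_i)(x)$, which do tend to $0$ by weak$^*$ convergence, so that disjointness of $(y_i)$ makes the series $\sum_i (g-g_i)(x)y_i$ norm convergent \emph{in} $F$; positivity and domination are then restored by a rank-one perturbation, namely $S(x)=\sum_i (g-g_i)(x)y_i+2h(x)y$ and $T(x)=4h(x)y$, and the perturbation is harmless along $(x_n)$ because $h(x_n)\to 0$.

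In $(2b)\Rightarrow(1)$ your argument rests on the assertion that in an order continuous Banach lattice the solid hull of a relatively compact set is relatively compact. This is false: in $L^{2}[0,1]$ the solid hull of the singleton $\{\mathbf{1}\}$ is the order interval $[-\mathbf{1},\mathbf{1}]$, which contains the Rademacher functions and hence is not totally bounded. What is true is that the solid hull of a relatively compact set in an order continuous lattice is L-weakly compact, so your reasoning only shows that $S$ maps L-weakly compact sets to L-weakly compact sets --- not to relatively compact sets, which is what Lwcc demands; indeed, if your lemma were true, domination would hold with no hypothesis at all on $E$ and $F$, contradicting the theorem itself. This direction apparently cannot be done by such soft solid-hull arguments: the paper deduces from Theorem 3.5 of the original article that $T_{|E^{a}}$ is AM-compact, invokes the Aqzzouz--Nouira--Zraoula domination theorem for AM-compact operators with order continuous range to conclude that $S_{|E^{a}}$ is AM-compact, and then converts back to the Lwcc property.
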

	\begin{proof}
$1)\Rightarrow 2)$ Suppose that neither $E^a$ is discrete nor $F$ is order continuous. Since $E^a$ is not discrete, by Corollary  3.1 \cite{Khabaoui}  and the proof of Theorem \cite{Wick} we infer that there exist a weakly null sequence $(x_n)$ of $E$ which is L-weakly compact, $0\leq h\in E' $, $g,g_n \in \left[-h,h \right]$ such that  $g_n \overset{w^*}\longrightarrow g$ and $g_n(x_n)>\varepsilon $ for every $n$ and  for some $\varepsilon >0 $. On the other hand, since $F$ is not order continuous then by Theorem 2.4.2 \cite{Mey} there exist a positive sequence $(y_n)$ of $F$  and $ y \in F$ such that $ 0 \leq y_n \leq y$ for every $n$ and $||y_n||=1$ for every integer $n$.\\
Now, we consider the following two operators $S,T : E \rightarrow F$ defined as follows $$S(x) =\sum_{i=1}^{\infty} (g(x)-g_i(x))y_i + 2h(x)y$$ and $$T(x) = 4h(x)y;$$
clearly that $0\leq S \leq T$ and that $T$ is Lwcc but $S$ is not Lwcc. Indeed, we have for every $n$ $$ ||S(x_n)-2h(x_n)y|| \geq |g(x_n)-g_n(x_n)|$$ and since $(x_n)$ is weakly null then $h(x_n)y\rightarrow 0$ and $g(x_n)\rightarrow 0$. And since  $|g_n(x_n)|>\varepsilon $ for every $n$, then  $|g(x_n)-g_n(x_n)|\nrightarrow 0$ which implies that $\|S(x_n)\|\nrightarrow0$ and hence $S$ is not Lwcc.

$2-a)\Rightarrow 1)$ Let $(x_n)$ be a weakly null sequence of $E$ which is L-weakly compact, by Corollary 3.1 \cite{Khabaoui} the sequence $(|x_n|)$ is weakly null and L-weakly compact and since $T$ is Lwcc then $\|T(|x_n|)\| {\overset{ }{\longrightarrow}} 0$. As $|S(x_n)|\leq T(|x_n|)$ for every $n$, then $\|S(x_n)\| {\overset{ }{\longrightarrow}} 0$, that is $S$ is Lwcc.

$2-b) \Rightarrow 1)$ Since $T$  is Lwcc, then by Theorem 3.5 \cite{Khabaoui} $T_{|E^a}$ is an AM-compact operator and since $F$  is order continuous it follows from Theorem 2.10 \cite{Zra} that $S_{|E^a}$ is an AM-compact operator and hence by Theorem 3.5 \cite{Khabaoui} $S$ is Lwcc.
\end{proof}


\end{document}